\title{Forcing $k$-repetitions in degree sequences}
\author{
Y. Caro
\thanks{Department of exact sciences, University of Haifa at Oranim, Tivon 36006, Israel.
email: yacaro@kvgeva.org.il}
\and
A. Shapira
\thanks{School of Mathematics, Tel-Aviv University, Tel-Aviv, Israel 69978. Email: {\tt asafico@tau.ac.il}. Supported in part by ISF Grant 224/11 and a Marie-Curie CIG Grant 303320.}
\and
R. Yuster
\thanks{Department of Mathematics, University Haifa, Haifa 31905, Israel.
email: raphy@math.haifa.ac.il}
}
\date{}
\newtheorem{theorem}{Theorem}[section]
\newtheorem{prop}[theorem]{Proposition}
\newtheorem{lemma}[theorem]{Lemma}
\newtheorem{corollary}[theorem]{Corollary}
\newenvironment{proof}{\noindent{\textbf{Proof:}}}{$\qed$\vskip\belowdisplayskip}
\newenvironment{prevproof}[2]{\noindent {\bf {Proof ({#1}~\ref{#2}):}}}{$\qed$\vskip \belowdisplayskip}
\newcommand{\qed}{\hspace*{\fill} \rule{7pt}{7pt}\vspace{10pt}}
\begin{document}
\maketitle
\setcounter{page}{1}
\begin{abstract}

One of the most basic results in graph theory states that every graph with at least two vertices has two vertices with the same degree.
Since there are graphs without $3$ vertices of the same degree, it is natural to ask if for any fixed $k$, every graph $G$ is ``close'' to a graph $G'$ with 
$k$ vertices of the same degree. Our main result in this paper is that this is indeed the case. Specifically, we show that
for any positive integer $k$, there is a constant $C=C(k)$, so that given any graph $G$, one can remove from $G$ at most $C$
vertices and thus obtain a new graph $G'$ that contains at least $\min\{k,|G|-C\}$ vertices of the same degree. 

Our main tool is a multidimensional zero-sum theorem for integer sequences, which we prove using an old geometric approach of Alon and Berman.

\end{abstract}

\section{Introduction}

All graphs considered in this paper are finite, simple, and undirected.
Graph theory notation follows \cite{bollobas-1978}.

Perhaps one of the most obvious properties shared by all graphs (with at least two vertices) is that they have at least two vertices
with the same degree. In general, the {\em repetition number} of a graph $G$, denoted by $rep(G)$, is
the maximum multiplicity of a vertex degree in $G$.
So, any graph with at least two vertices has $rep(G) \ge 2$, and there are simple constructions showing that
equality holds for infinitely many graphs.
Repetition numbers of graphs (and hypergraphs) have been studied by several researchers.
See \cite{BBLM-2013,bollobas-1996,BS-1997,CW-2009,CY-2010,CHPSW-2001,CS-1993} for some representing works.

Since infinitely many graphs have $rep(G)=2$, it seems interesting to determine how many vertices one must delete from a graph in order
to increase its repetition number to $3$ or higher. We state the problem more formally as follows.
For a given positive integer $k$, let $C=C(k)$ denote the least integer such that any graph with $n$ vertices
has an induced subgraph with at least $n-C$ vertices whose repetition number is at least $\min\{k,n-C\}$.
Stated otherwise, we can delete at most $C(k)$ vertices from any graph and obtain a subgraph with $k$ repeated degrees
(or a regular subgraph in case the subgraph has less than $k$ vertices).
Trivially $C(1)=C(2)=0$. A simple lower bound of $C(k) \ge k-2$ is easily obtained from the fact that there are infinitely
many graphs with $rep(G)=2$ and precisely one pair of vertices with the same degree.
For general $k$, it is not obvious, however, that $C(k)$ is finite.
The main result of this paper shows that it is.

Before we state our main result, let us consider a more general setting, where the graph in question is the complete graph
with integer edge weights in $\{0,\ldots,r\}$. The (weighted) degree of a vertex $v$, denoted by $deg(v)$, is the sum
of the weights of the edges incident with it. Likewise, the repetition number of an edge-weighted graph is the
maximum multiplicity of a vertex degree.
Observe that, unlike the unweighted case, already for $r=2$ we do not necessarily have two vertices with the
same degree, as can be seen by assigning the weights $0,1,2$ to the edges of a triangle.
The related well-studied graph parameter of {\em irregularity strength} asks for the smallest $r$ for which the edges of
a graph can be weighted with $\{1,\ldots,r\}$ (weight $0$ represents non-edges) such that all vertex degrees are distinct
(see \cite{KKP-2011,seamone-2012}).

For given positive integers $k,r$, let $C=C(k,r)$ denote the least integer such that any complete graph with $n$ vertices
and edge weights in $\{0,\ldots,r\}$ has a complete subgraph with at least $n-C$ vertices whose repetition number is at least $\min\{k,n-C\}$.
Notice that $C(k)=C(k,1)$. Our main result is then the following.

\begin{theorem}
\label{t:1}
For positive integers $k,r$ there exists a constant $C=C(k,r)$ such that the following holds.
Any complete graph with $n$ vertices whose edges have weights in $\{0,\ldots,r\}$ contains a complete subgraph
with at least $n-C$ vertices whose repetition number is at least $\min\{k,n-C\}$.
\end{theorem}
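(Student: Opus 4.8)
The plan is to reduce the statement to a multidimensional zero-sum problem for bounded integer vectors. Observe first that deleting a set $S$ of vertices changes the (weighted) degree of each surviving vertex $v$ from $\deg(v)$ to $\deg(v)-\sum_{u\in S}w(u,v)$. To force $k$ vertices to share a common degree, I would fix $k$ candidate ``target'' vertices $v_1,\ldots,v_k$ and delete only vertices from outside this set. The degrees of the targets all coincide in the new graph precisely when, for every pair $i,j$, the original difference $\deg(v_i)-\deg(v_j)$ is cancelled by $\sum_{u\in S}(w(u,v_i)-w(u,v_j))$. Encoding this by associating to each non-target vertex $u$ the vector $y_u=(w(u,v_1)-w(u,v_2),\ldots,w(u,v_1)-w(u,v_k))$, whose entries lie in $\{-r,\ldots,r\}$, the whole requirement becomes a single condition $\sum_{u\in S}y_u=b$ in dimension $k-1$, where $b$ records the original pairwise differences of the targets. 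Thus the task is to realize a prescribed integer vector $b$ as a subset sum of bounded-entry vectors, using a deletion set $S$ whose size is bounded independently of $n$.

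For this subset-sum formulation to be tractable the target $b$ must be small, so the next step is to choose the targets to have nearly equal degrees. Since every weight is at most $r$, the degrees lie in an interval of length at most $r(n-1)$; partitioning the sorted degree sequence into consecutive blocks and using that their ranges sum to at most $r(n-1)$, one sees that for $n$ large some degree-window of length $O(rm)$ contains at least $m$ vertices, for any prescribed $m=m(k,r)$. Refining inside such a window, I would extract $k$ target vertices with all pairwise degree differences bounded by some $D=D(k,r)$, so that $\|b\|_\infty\le D$, while the remaining at least $n-m$ vertices serve as a reservoir from which $S$ will be chosen.

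It remains to produce the bounded deletion set, and this is exactly where the multidimensional zero-sum theorem (the paper's main tool, obtained via the geometric method of Alon and Berman) enters: applied to the reservoir vectors $\{y_u\}$ it yields a set $S$ of size bounded only in terms of $k$ and $r$ with $\sum_{u\in S}y_u=b$, which equalizes the $k$ targets. Taking $C(k,r)$ to be this bound together with the few vertices discarded from the window completes the argument; the degenerate regime $n\le C$ is covered by the $\min\{k,n-C\}$ clause, since there the conclusion is vacuous or asks only for a regular subgraph on few vertices.

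The heart of the difficulty, and the step I expect to be the main obstacle, is the zero-sum theorem itself. The delicate points are that the size of $S$ must not grow with $n$, and that the prescribed target $b$ must be reachable for \emph{every} weight configuration, including degenerate ones in which the vectors $y_u$ fail to span the relevant directions (for instance when the chosen targets behave like twins with respect to the rest of the graph). I would address the latter by exploiting the freedom to select the $k$ targets inside the large cluster produced above, so as to guarantee a sufficiently rich supply of reservoir vectors; but making this uniform and unconditional is precisely what forces the geometric, Alon--Berman style analysis rather than a naive pigeonhole.
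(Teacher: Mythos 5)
Your skeleton matches the paper's proof: find a cluster of vertices with nearly equal degrees (your degree-window argument is essentially Lemma~\ref{l:pigeon}), encode each remaining vertex $u$ by the vector of differences of its weights to $k$ chosen targets, and invoke the zero-sum machinery to find a bounded deletion set. But there is a genuine gap at the decisive step, and it is exactly the step your last paragraph waves at. Theorem~\ref{t:2} (and Corollary~\ref{c:2}) can only produce a nonempty subsequence whose sum is \emph{zero}, or equivalently a short subsequence whose sum equals the \emph{total} sum of the sequence; it cannot produce a subsequence with an arbitrary prescribed small sum $b$. In your reduction $b$ is the vector of original degree differences of the targets, while the total sum of the reservoir vectors equals $b$ \emph{minus} the vector of internal degree differences of the targets (edges inside the target set contribute to $b$ but not to any $y_u$). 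So unless the targets have equal internal degrees, the sum you need to hit differs from the total sum, and then no zero-sum theorem can deliver it; indeed the task is impossible in general. For example, if every reservoir vertex has the same weight to all $k$ targets (the targets are ``twins'' to the outside), then every $y_u=0$, so all subset sums vanish, while $b\neq 0$ whenever the targets have unequal internal degrees: no deletion of non-target vertices can ever equalize such targets. Your proposed remedy --- selecting targets so that the reservoir vectors form a ``rich'' or spanning family --- is the wrong invariant: richness cannot be guaranteed, and it is not what is needed.

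What is needed, and what your proposal is missing, is to choose the targets so that their \emph{internal} degrees are forced to coincide, which makes $b$ equal to the total reservoir sum so that Corollary~\ref{c:2} applies verbatim. The paper does this with a Ramsey step: the close-degree cluster is taken of size $s=R_{r+1}(k)$, the multicolored Ramsey number; viewing the $r+1$ possible edge weights as colors, Ramsey yields $k$ vertices of the cluster all of whose internal edges have the same weight $p$, hence each target has internal degree exactly $p(k-1)$. The total sum $z$ of the reservoir vectors then equals the vector of genuine degree differences of the targets, which lies in $[-sr,sr]^{k-1}$ by the choice of the cluster, and Corollary~\ref{c:2} (with $d=k-1$, $q=sr$) gives a subsequence of length at most $(s+2)(2r(k-1)+1)^{k-1}$ summing to exactly $z$; deleting it equalizes the external degrees, and the internal degrees were already equal. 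This Ramsey ingredient is precisely why the paper's constant $C(k,r)$ involves $R_{r+1}(k)$; without it, or some substitute that guarantees equal internal degrees among the targets, your reduction to the zero-sum theorem does not go through.
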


The bound we obtain for $C(k,r)$ via the proof of Theorem \ref{t:1} is exponential in terms of $k$ already for $r=1$.
In the case $r=1$, it satisfies $C(k)=C(k,1) \le (8k)^k$. Together with a nontrivial lower bound of $C(k) \ge \Omega(k \log k)$
that we prove in Section \ref{s:lower} we obtain the following corollary.

\begin{corollary}\label{c:bound} We have $$\Omega(k \log k) \le C(k) \le (8k)^k\;.$$
\end{corollary}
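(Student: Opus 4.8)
The upper bound requires essentially no new work: it is exactly the case $r=1$ of Theorem~\ref{t:1}, so the plan is simply to invoke that theorem and then track the quantitative dependence on $k$ produced by its proof. Once the proof of Theorem~\ref{t:1} is carried out with the constants recorded, one reads off $C(k)=C(k,1)\le (8k)^k$, and the upper half of the corollary follows immediately. Thus the entire burden of the corollary is the lower bound $C(k)\ge \Omega(k\log k)$, which is where I would spend all the effort.

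For the lower bound the plan is to exhibit an explicit graph $G=G_k$ on $n$ vertices with $rep(G)\le k-1$ (so that forcing $k$ repetitions is genuinely nontrivial) for which \emph{every} deletion of fewer than $ck\log k$ vertices leaves all degree multiplicities below $k$. The design principle is dictated by what goes wrong with the naive candidates: a threshold/antiregular graph has nested neighbourhoods and a disjoint union of cliques is internally regular, and in both cases a deleted set can shift many degrees \emph{in lockstep}, so that $k$ repetitions appear after only $O(k)$ (indeed $O(1)$) deletions. The graph $G_k$ must therefore have a degree sequence that is spread out with multiplicities tuned just below $k-1$, together with neighbourhoods that are far from nested, so that the drop $|N(v)\cap S|$ inflicted on different vertices by a common small set $S$ cannot be tuned jointly. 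The key reduction is the observation that if, after deleting $S$, the vertices $v_1,\dots,v_k$ all attain the common degree $d$, then each $v_i$ of original degree $d+t_i$ must satisfy $|N(v_i)\cap S|=t_i$ exactly; hence aligning $k$ vertices at level $d$ amounts to finding a single set $S$ realising a prescribed profile of neighbour-counts on $k$ surviving vertices simultaneously. I would phrase ``can $s$ deletions force $k$ repetitions in $G_k$?'' as this combinatorial feasibility problem and then bound the minimum feasible $|S|$ from below.

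The factor $\log k$ is expected to emerge from a harmonic sum: because each degree class may be used by at most its multiplicity ($\le k-1$, and typically far fewer for the relevant classes), collecting $k$ vertices at a common target $d$ forces the use of classes $d, d+1, d+2,\dots$ up through roughly the $j$-th scale, and the cost charged at scale $j$ behaves like $k/j$, so that summing over scales yields $\sum_{j} k/j \asymp k\,H_k = \Theta(k\log k)$. Concretely I would set up a charging argument in which each deleted vertex is accountable for only a bounded number of ``units'' of degree drop among the targets, and show that the total drop needed to align $k$ vertices across the spread degree classes is $\Omega(k\log k)$ units, whence $|S|=\Omega(k\log k)$. The main obstacle, and the place where the construction must be engineered with care, is \emph{robustness over all choices}: the deleter may pick any target value $d$ and, more dangerously, may choose to create the $k$ repetitions among an \emph{unanticipated} set of vertices rather than a designated one. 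A construction that merely makes one natural family of targets expensive typically leaves some other family cheap to align (this is precisely why one cannot push the bound up to the $\Omega(k^2)$ that a single ``private-neighbourhood'' family would suggest). The heart of the proof is therefore to verify the harmonic lower bound uniformly, for every admissible $d$ and every candidate $k$-subset of surviving vertices, which I would handle by a global counting inequality relating $|S|$ to the total realised degree drop rather than by analysing individual target families.
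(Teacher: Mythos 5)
Your upper bound is exactly the paper's route (invoke Theorem~\ref{t:1} with $r=1$ and track the constants, using $s=R_2(k)<4^k$, which is Corollary~\ref{c:explicit}), so there is nothing to compare there. The entire issue is the lower bound, and there your proposal has a genuine gap: it is a design brief rather than a proof (no graph $G_k$ is ever constructed), and, more seriously, its central mechanism cannot work. The charging scheme you propose --- ``each deleted vertex is accountable for only a bounded number of units of degree drop among the targets'' --- is unachievable: a deleted vertex adjacent to $m$ of the $k$ targets lowers all $m$ of their degrees at once, and no construction can prevent the adversary from choosing targets that share many common neighbours. The only inequality that holds in general is $|S|\ge D/k$, where $D$ is the total degree drop realised on the targets. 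This can never certify more than roughly $k$ deletions: by the result of Erd\H{o}s et al.\ \cite{ECRS-1993} quoted in the paper, \emph{every} graph contains $k$ vertices whose degrees lie in a window of width $k-2$, so for that family the drop needed to equalize at the level of its minimum degree is at most $k(k-2)$, and the counting bound is at most $k-2$. Hence a ``global counting inequality relating $|S|$ to the total realised degree drop'' is structurally capped at $O(k)$; the $\log k$ factor cannot come from counting drops, no matter how the degree classes are tuned. To rule out cheap equalization of such tight-window families one must show they cannot be equalized \emph{at all} near their natural level, and that is a structural statement, not a counting one.

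This is precisely what the paper's Theorem~\ref{t:3} does, with a two-level construction. Take a graph $H_n$ with $rep(H_n)=2$ and blow up each vertex into an independent set $B_i$ of size $q=5k$; the blow-up separates the degree levels of distinct blobs by roughly $q/2$, so after fewer than $(k\ln k)/10$ deletions any $k$ equal-degree vertices span at most about $(\ln k)/20$ blobs, forcing at least $20k/\ln k$ of them into a single blob. Vertices in one blob have \emph{identical} neighbourhoods outside it, so deletions shift them in lockstep, and equalizing them reduces to equalizing degrees inside the graph placed within the blob. That graph is $D_q$ of Lemma~\ref{l:goodgraph} --- a disjoint union of cliques of harmonically decaying sizes $\lceil 2q/(i\ln q)\rceil$ --- every induced subgraph of which has repetition number at most $3q/\ln q < 20k/\ln k$, a contradiction. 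Your intuition that the $\log k$ arises from a harmonic sum is realised in the paper, but inside the clique sizes of Lemma~\ref{l:goodgraph}, not as a charge on deletions; without this blow-up-plus-resistant-blob structure (or some substitute that handles the lockstep phenomenon you yourself identify as the danger), your outline cannot reach $\Omega(k\log k)$.
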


It would be of course interesting to close the exponential gap between the upper bound and lower bound in the above corollary.
Already for the first nontrivial case, $C(3)$, the proof of Theorem \ref{t:1} gives the explicit bound $C(3) \le 203$.
An alternative ad hoc argument shows that $3 \le C(3) \le 6$ (see Section \ref{s:c3}).
Even the exact value of $C(3)$ is an open problem. When trying to adapt these ad hoc arguments to obtain an upper bound for $C(4)$
one quickly sees that the case analysis becomes significantly involved.
Instead, our proof of Theorem \ref{t:1} is obtained by reducing the problem (after some additional combinatorial arguments are applied)
to another problem in additive combinatorics, which may be interesting in its own right.

Let $[-r,r]^d$ denote the set of $d$-dimensional vectors over $\{-r,\ldots,r\}$.
The main tool used in the proof of Theorem \ref{t:1} is the following theorem.
\begin{theorem}
\label{t:2}
For positive integers $r,d,q$ the following holds.
Any sequence of $n \ge (\lceil q/r \rceil +2)(2rd+1)^d$ elements of $[-r,r]^d$ whose sum is in $[-q,q]^d$ contains a nonempty proper subsequence whose sum is zero.
\end{theorem}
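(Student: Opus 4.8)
The plan is to reduce the statement to a pigeonhole argument on prefix sums, the classical obstacle being that prefix sums of vectors with a nonzero total sum need not stay in a bounded region. Denote the sequence by $v_1,\ldots,v_n$ and its sum by $S\in[-q,q]^d$, and for a permutation $\sigma$ write $S_0=0$ and $S_k=v_{\sigma(1)}+\cdots+v_{\sigma(k)}$. Any coincidence $S_i=S_j$ with $i<j$ yields the zero-sum subsequence $v_{\sigma(i+1)},\ldots,v_{\sigma(j)}$, so it suffices to force such a repetition while keeping the two indices from being $0$ and $n$ simultaneously.

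First I would apply the geometric tool underlying the Alon--Berman approach, namely the Steinitz rearrangement lemma. Since each $\|v_i\|_\infty\le r$, one can reorder the vectors so that every prefix sum stays within $\ell_\infty$-distance roughly $rd$ of the segment point $\frac{k}{n}S$; formally one subtracts $\frac1n S$ from each vector to reduce to a zero-sum family and applies the $\ell_\infty$ Steinitz bound, which costs only an additive constant absorbed later. Thus after reordering, each $S_k$ lies in a box of side about $2rd$ centered at $\frac{k}{n}S$, and this center traces the one-dimensional segment from $0$ to $S$.

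Next I would partition the indices $0,1,\ldots,n$ into $m=\lceil q/r\rceil+2$ consecutive groups according to the value of $k/n$, chosen so that across each group the center $\frac{k}{n}S$ moves by at most $r$ in every coordinate; this is possible because each coordinate of $S$ has absolute value at most $q$ and the segment is one-dimensional, so a change of $\frac1m$ in $k/n$ shifts each coordinate by at most $\frac{q}{\lceil q/r\rceil}\le r$. Within a single group all prefix sums then lie in one common box containing about $(2rd+1)^d$ lattice points. Since the total number of prefix sums is $n+1>m(2rd+1)^d$, some group must contain more than $(2rd+1)^d$ of them, and pigeonhole produces a coincidence $S_i=S_j$ inside that group. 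As each group is a proper range of indices (here $m\ge2$ is exactly what we use), the pair $(i,j)$ is never $(0,n)$, so the resulting zero-sum subsequence is nonempty and proper.

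The main obstacle I expect is purely quantitative: matching the Steinitz fluctuation, the box side, and the number of groups so that the product lands exactly on $(\lceil q/r\rceil+2)(2rd+1)^d$ rather than on a weaker bound with an inflated box factor of the form $(r+2rd+1)^d$. This will require care in choosing the grouping thresholds and in tracking the additive loss incurred when passing to the zero-sum family in Steinitz's lemma. A secondary point is the degenerate case $S=0$, where one must still ensure the coincidence found does not use the full range, which the grouping with $m\ge2$ already guarantees.
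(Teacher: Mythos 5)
Your high-level framework---a Steinitz-type rearrangement followed by pigeonhole on prefix sums, with properness enforced by keeping the coinciding indices inside one group---is sound and close in spirit to the paper's, but the step ``within a single group all prefix sums then lie in one common box containing about $(2rd+1)^d$ lattice points'' is false, and the quantitative obstacle you flag at the end is not a bookkeeping issue: it is intrinsic to your scheme. After subtracting the mean and applying the $\ell_\infty$ Steinitz bound (Lemma \ref{l:2.1}), each prefix sum lies within distance $d(r+q/n)$ of the moving center $\frac{k}{n}S$; if the center drifts by $\delta$ per coordinate across a group, the integer points available to that group fill a box with roughly $(2rd+\delta+1)^d$ points, and with your choice $\delta\le r$ this is $(2rd+r+1)^d$, strictly more than $(2rd+1)^d$. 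So your pigeonhole needs roughly $n>\left(q/\delta+2\right)(2rd+\delta+1)^d$, while you are only given $n\ge(\lceil q/r\rceil+2)(2rd+1)^d$. Moreover, no retuning of the grouping rescues the stated constant in general: optimizing over $\delta$ (the optimum sits near $\delta=(2rd+1)/(d-1)$) still leaves, for large $q/r$, an overhead factor of $\frac{rd}{2rd+1}\left(\frac{d}{d-1}\right)^{d-1}$, which exceeds $1$ for every $d\ge 4$ and tends to $e/2$ as $d\to\infty$. Your argument therefore proves the theorem only with an inflated constant---enough for the qualitative use in Theorem \ref{t:1}, but not the statement as given, nor the explicit bounds of Corollary \ref{c:explicit}. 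Any rescue within your scheme would need a genuinely sharper count (lattice points in a tube around the segment, via a volume argument) that is not in your proposal.

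The idea you are missing is how to eliminate the drift altogether rather than chop it into pieces. The paper does not subtract the mean: it appends $p=\lceil q/r\rceil$ artificial vectors of $[-r,r]^d$ whose sum is $-w$ (possible since $w\in[-q,q]^d$), so that the augmented family of $n+p$ vectors is exactly zero-sum. Sevast'yanov's lemma then confines \emph{all} prefix sums to the single box $[-rd,rd]^d$, which (after scaling by $1/r$) supports exactly $(2rd+1)^d$ possible values---no inflation anywhere. The factor $\lceil q/r\rceil+2$ is instead paid in the pigeonhole: one demands a prefix-sum value repeated at least $p+2$ times, which yields $p+1$ disjoint consecutive blocks, each summing to zero; since only $p$ artificial vectors exist, some block consists purely of original vectors, and a short count shows this block cannot be all of $X$ (otherwise the initial segment before the first repeat and the other $p$ nonempty blocks would require at least $p+1$ artificial vectors), so it is nonempty and proper. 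In short, the paper trades your ``many groups, multiplicity two, inflated box'' for ``one box, multiplicity $p+2$,'' and that trade is exactly what attains the bound $(\lceil q/r\rceil+2)(2rd+1)^d$.
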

It is noteworthy to mention here a result of Erd\H{o}s et al. \cite{ECRS-1993} who proved that for every positive integer $k$, there is a number $N(k)$
such that for all $n > N(k)$, if the edges of the complete graph $K_n$ are red-blue colored, then there is a monochromatic complete subgraph
$K_k$ (say blue) the degree of whose vertices in the blue-colored graph differ by at most $R(k)-2$ where $R(k)$ is the diagonal Ramsey number.
Our proof of Theorem \ref{t:1} shows, in fact, that for $n > N(K)$ and any red-blue coloring, we can delete some $M(k)$ vertices
such that there is a monochromatic $K_k$ (say blue) with equal degrees in the blue-colored graph.

We finally mention that our result here shows that any graph $G$ is ``close'' to another graph $G'$ with $k$ vertices of the same degree, where
by close we mean that few vertices need to be removed. It is of course natural to ask what happens if we are allowed to remove (or also add) edges.
As it turns out this variant of the problem is not hard. Indeed, it is easy to see that in this case it is enough to remove $O(k^2)$ edges in order to force $k$ vertices of the same degree. It is also easy to see that $\Omega(k^2)$ edge removals are required. We omit the details.

The rest of this paper is organized as follows. Section \ref{s:t2} contains the proof of Theorem \ref{t:2}.
The proof of Theorem \ref{t:1} is given in Section \ref{s:t1}.
Section \ref{s:c3} considers the special case $C(3)$.
A non-linear lower bound for $C(k)$ is established in Section \ref{s:lower}.

\section{Proof of Theorem \ref{t:2}}\label{s:t2}

Our proof of Theorem \ref{t:2} uses a geometric approach similar to the one used by Alon and Berman in \cite{AB-1986}.
To this end, we need the following result of Sevast'yanov \cite{sevastyanov-1978}.
\begin{lemma}[Sevast'yanov \cite{sevastyanov-1978}]\label{l:2.1}
Let $V$ be any normed $d$-dimensional space. Suppose $v_1,\ldots,v_n \in V$ where $\|v_i\| \le 1$ and $\sum_{i=1}^n v_i =0$.
Then there is a permutation $\pi$ on $\{1,\ldots,n\}$ such that for all $j=1,\ldots,n$,
$$
\left\|\sum_{i=1}^j v_{\pi(i)}\right\| \le d\;.
$$
\end{lemma}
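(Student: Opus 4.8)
The plan is to prove the statement by the classical Steinitz-type route: a purely linear-algebraic construction in which the norm is used only once, at the very end, which is what yields the clean constant $d$. Since the case $n < d$ is trivial (under any ordering the $j$-th partial sum has at most $j \le n < d$ terms, each of norm at most $1$), I would assume throughout that $n \ge d$.

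First I would replace the goal of producing a permutation by the task of building a nested family of index sets carrying fractional ``zero-sum certificates.'' Concretely, I would prove by downward induction on $k$ the following claim: for every $k$ with $d \le k \le n$ there is a set $A_k \subseteq \{1,\dots,n\}$ of size $k$ together with coefficients $\lambda_i \in [0,1]$ (for $i \in A_k$) such that $\sum_{i \in A_k}\lambda_i = k-d$ and $\sum_{i \in A_k}\lambda_i v_i = 0$, and such that the sets are nested, $A_d \subseteq A_{d+1} \subseteq \cdots \subseteq A_n = \{1,\dots,n\}$. The base case $k=n$ is immediate: take $A_n = \{1,\dots,n\}$ and $\lambda_i = (n-d)/n$, which lies in $[0,1]$, has total mass $n-d$, and satisfies the vector equation because $\sum_i v_i = 0$.

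The inductive step is where the real work lies. Given $A_k$ with its coefficients and $k > d$, I would consider the polytope $P$ of all real vectors $x = (x_i)_{i \in A_k}$ with $0 \le x_i \le 1$, $\sum_i x_i = k-d-1$, and $\sum_i x_i v_i = 0$. Scaling the given coefficients by $(k-d-1)/(k-d)$ shows $P$ is nonempty, and being bounded it has a vertex $x^\ast$. The defining equalities number $d+1$ (one scalar mass equation together with the $d$ coordinates of the vector equation, using $\dim V = d$), so at a vertex at most $d+1$ coordinates can lie strictly inside $(0,1)$. The crux is then a counting argument producing a coordinate of $x^\ast$ equal to $0$: if none vanished, every coordinate would be either $1$ or fractional, and balancing the total mass $k-d-1$ against the ones and the fractional entries would force the number of fractional coordinates to exceed $d+1$, a contradiction. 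Deleting such a zero coordinate $j$ then gives $A_{k-1} = A_k \setminus \{j\}$ with the restricted coefficients, and these still satisfy both equalities — the vector sum is unchanged precisely because $x^\ast_j = 0$ — completing the step.

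Finally I would read the permutation off the nested family: place the $d$ elements of $A_d$ in positions $1,\dots,d$ in any order, and for each $k > d$ place the unique element of $A_k \setminus A_{k-1}$ in position $k$. For $j \ge d$ the first $j$ elements are exactly $A_j$, so writing $\lambda_i$ for the coefficients attached to $A_j$ and using $\sum_{i \in A_j}\lambda_i v_i = 0$ together with $\sum_{i \in A_j}(1-\lambda_i) = j-(j-d) = d$, the triangle inequality gives $\|\sum_{i\in A_j} v_i\| = \|\sum_{i\in A_j}(1-\lambda_i)v_i\| \le \sum_{i\in A_j}(1-\lambda_i)\|v_i\| \le d$; for $j < d$ the bound is trivial since there are only $j \le d$ terms. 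I expect the vertex/counting step to be the main obstacle: guaranteeing that at each deletion one variable can be driven exactly to $0$ while the zero vector-sum is preserved is exactly what pins the constant at $d$ rather than something larger.
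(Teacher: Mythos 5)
Your proof is correct. Note that the paper itself gives no proof of this lemma: it is imported verbatim from Sevast'yanov \cite{sevastyanov-1978}, and what you have written is precisely the classical Grinberg--Sevast'yanov polytope argument for the Steinitz constant $d$ (the nested sets $A_d \subseteq \cdots \subseteq A_n$ with fractional zero-sum certificates of mass $k-d$, a vertex of the polytope $\{x \in [0,1]^{A_k} : \sum_i x_i = k-d-1,\ \sum_i x_i v_i = 0\}$ having at most $d+1$ fractional coordinates, and the norm entering only in the final telescoping bound $\sum_{i \in A_j}(1-\lambda_i) = d$). Your counting step is sound as sketched: if no coordinate of the vertex vanished, the number $t$ of ones would satisfy $t \le k-d-2$ (since the fractional mass is strictly positive), forcing at least $d+2 > d+1$ fractional coordinates; so the argument is complete and matches the proof in the cited literature rather than anything in this paper.
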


\vspace{5pt}
\noindent
\begin{prevproof}{Theorem}{t:2}
Consider a sequence $X=[x_1,\ldots,x_n]$ of $n \ge(\lceil q/r \rceil +2)(2rd+1)^d$ elements of $[-r,r]^d$ whose sum is $w \in [-q,q]^d$.
It is easy to see that since $w \in [-r,r]^d$, we can always find $p=\lceil q/r \rceil$ vectors $x_{n+1},\ldots,x_{n+p} \in [-r,r]^d$ whose sum is $-w$. Adding these ``artificial'' vectors to $X$ we obtain a zero-sum collection $X'=[x_1,\ldots,x_n,x_{n+1},\ldots,x_{n+p}]$.
For every $1 \leq i \leq n+p$ set $v_i = x_i/r$, and consider $X''=[v_1,\ldots,v_{n+p}]$. Then $X''$ is a zero-sum sequence with $\|v_i\|_{\infty} \le 1 $ for every $v_i \in X''$.
By Lemma \ref{l:2.1} (with the $\ell_{\infty}$ norm), there is a permutation $\pi$ on $\{1,\ldots,n+p\}$ such that for all $j=1,\ldots,n+p$, we have
$\left\|\sum_{i=1}^j v_{\pi(i)}\right\|_{\infty} \le d$.
Observe now that each coordinate of any $v_i$ is a rational of the form $t/r$ where $t \in \{-r,\ldots,r\}$.
Hence, any possible sum of a subset of vectors of $X''$ whose $\ell_{\infty}$ norm is at most $d$ is a vector with rational coordinates
where each coordinate is of the form $t/r$ where $t \in \{-rd \ldots, rd\}$. Hence, there are at most $(2rd+1)^d$ such possible sums.
As any prefix sum of the elements of $X''$ ordered by the permutation $\pi$ has $\ell_{\infty}$ norm at most $d$, we have, by the pigeonhole principle,
that some prefix sum value repeats at least $(n+p)/(2rd+1)^d$ times. Since $n \ge (p+2)(2rd+1)^d$, some prefix sum value, call it $z$, repeats
at least $p+2$ times.

Consider therefore $p+2$ locations $j_1,\ldots,j_{p+2}$ for which $\sum_{i=1}^{j_\ell} v_{\pi(i)} = z$ for $\ell=1,\ldots,p+2$.
This means that for $\ell=1,\ldots,p+1$, we have
$$
\sum_{i=j_\ell+1}^{j_{\ell+1}} v_{\pi(i)} = 0\;.
$$
As we added only $p$ artificial vectors to $X'$ (and thus also to $X''$), one of these $p+1$ collections must give us the collection we want.
Specifically, there exists some $\ell$ for which  ${\pi(j_{\ell}+1)},\ldots,{\pi(j_{\ell+1})} \in \{1,\ldots,n\}$, implying that we have a non-empty and proper collection of vectors from $X$ (namely $x_{\pi(j_{\ell}+1)},\ldots,x_{\pi(j_{\ell+1})}$) whose sum is zero.
\end{prevproof}

The following is an immediate corollary of Theorem \ref{t:2}.
\begin{corollary}
\label{c:2}
For positive integers $r,d,q$ the following holds.
Any sequence of $n \ge (\lceil q/r \rceil +2)(2rd+1)^d$ elements of $[-r,r]^d$ whose sum, denoted by $z$, is in $[-q,q]^d$ contains a subsequence
of length at most $(\lceil q/r \rceil +2)(2rd+1)^d$ whose sum is $z$.
\end{corollary}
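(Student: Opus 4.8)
The plan is to deduce the corollary from Theorem \ref{t:2} by an iterated-deletion argument: repeatedly strip off zero-sum subsequences to shorten the sequence while leaving its total sum untouched. Write $N = (\lceil q/r \rceil +2)(2rd+1)^d$ for the threshold appearing in Theorem \ref{t:2}, and let $X$ be the given sequence of length $n \ge N$ with sum $z \in [-q,q]^d$. If $n \le N$ there is nothing to prove, so the interesting case is $n > N$.

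The key observation I would rely on is that deleting a zero-sum subsequence changes neither the total sum of the sequence nor, consequently, its membership in $[-q,q]^d$. Thus, as long as the current (shortened) sequence still has more than $N$ elements, it automatically has at least $N$ elements and its sum is still $z \in [-q,q]^d$; Theorem \ref{t:2} therefore applies and produces a nonempty \emph{proper} zero-sum subsequence. First I would set this up as a loop: while the length exceeds $N$, invoke Theorem \ref{t:2} and delete the zero-sum subsequence it returns. Since that subsequence is nonempty, each deletion decreases the length by at least one; since it is proper, what remains is a nonempty genuine subsequence of $X$ whose sum is still $z$.

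Because the length is a nonnegative integer that strictly decreases at every step, the process must terminate, and it can only terminate when the length has dropped to at most $N$. At that moment we are left with a subsequence of the original $X$ of length at most $N = (\lceil q/r \rceil +2)(2rd+1)^d$ whose sum is still $z$, which is exactly the assertion of the corollary. I do not expect a genuine obstacle here: the entire content is the repeated application of Theorem \ref{t:2}, and the only points requiring care are bookkeeping ones, namely verifying that the hypotheses of Theorem \ref{t:2} persist after each deletion (they do, since the sum is invariant and so remains in $[-q,q]^d$, and the length remains $\ge N$ throughout the loop) and confirming termination from the strictly decreasing length.
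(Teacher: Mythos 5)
Your proof is correct and matches the paper's intent: the paper states Corollary~\ref{c:2} as an immediate consequence of Theorem~\ref{t:2}, and the iterated deletion of nonempty proper zero-sum subsequences (which preserves the sum $z$ and strictly decreases the length until it drops to at most $(\lceil q/r \rceil +2)(2rd+1)^d$) is exactly the intended argument. Your bookkeeping---that the hypotheses of Theorem~\ref{t:2} persist after each deletion and that properness keeps the remainder nonempty---is sound.
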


\section{Proof of Theorem \ref{t:1}}\label{s:t1}

We start with the following simple lemma.
\begin{lemma}
\label{l:pigeon}
Suppose that $G$ is a complete graph with $n$ vertices whose edges have weights in $\{0,\ldots,r\}$.
If $n \ge s^2$ where $s$ is a positive integer, then there is a set $S$ of $s$ vertices with
the property that $|deg(x)-deg(y)| \le sr$ for any $x,y \in S$.
\end{lemma}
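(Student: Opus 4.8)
The plan is a direct pigeonhole argument on the vertex degrees. The first observation is that every vertex of $G$ is incident with exactly $n-1$ edges, each of weight at most $r$, so every degree is an integer lying in the range $\{0,1,\ldots,(n-1)r\}$. Thus all $n$ degrees are confined to an interval of length $(n-1)r$, and the goal is to locate $s$ of them inside a subinterval of length at most $sr$.

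To this end I would partition the possible degree values into the consecutive half-open blocks $B_j = [\,jsr,(j+1)sr\,)$ for $j=0,1,2,\ldots$. Any two degrees falling in a common block $B_j$ differ by strictly less than $sr$, so any block containing at least $s$ vertices immediately yields the desired set $S$. Since no degree exceeds $(n-1)r$, only the blocks $B_0,\ldots,B_J$ with $J=\lfloor (n-1)/s\rfloor$ are relevant, i.e.\ at most $\lfloor (n-1)/s\rfloor + 1$ blocks in total.

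It then remains to apply the pigeonhole principle and check that the number of blocks is small enough, relative to $n$, to force some block to contain $s$ (rather than merely $s-1$) vertices. Suppose for contradiction that every block holds at most $s-1$ vertices. Then $n \le (s-1)(\lfloor (n-1)/s\rfloor + 1) \le (s-1)(n-1+s)/s$, and a short rearrangement reduces this to $n \le (s-1)^2$. This contradicts the hypothesis $n \ge s^2 > (s-1)^2$, so some block must contain at least $s$ vertices, completing the proof. (The case $s=1$ is trivial, as any single vertex works.)

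I expect the only delicate point to be the bookkeeping in the last step: one must make sure that the floor term $\lfloor (n-1)/s\rfloor + 1$ counting the blocks is controlled tightly enough that pigeonhole delivers $s$ vertices, and it is precisely here that the quadratic hypothesis $n \ge s^2$ enters. Everything else is routine. An essentially equivalent alternative would be to sort the degrees and slide overlapping windows of $s$ consecutive vertices that chain end-to-end, bounding the sum of their spreads by the total range $(n-1)r$; this avoids blocks but requires the same careful count of how many windows fit, so I would favour the cleaner partition above.
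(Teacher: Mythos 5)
Your proof is correct, and it takes a different (though related) route from the paper's. You partition the \emph{range of degree values} $\{0,1,\ldots,(n-1)r\}$ into disjoint half-open blocks of width $sr$ and pigeonhole the $n$ vertices among the $\lfloor (n-1)/s\rfloor+1$ blocks; your key computation, that $n \le (s-1)\bigl(\lfloor (n-1)/s\rfloor+1\bigr)$ forces $n \le (s-1)^2$, contradicting $n \ge s^2$, is accurate. The paper instead sorts the \emph{vertices} by degree and forms overlapping windows of $s$ consecutive vertices (the last vertex of each window is the first of the next); if no window has spread at most $sr$, the gaps chain together and the total spread is at least $(sr+1)\lfloor (n-1)/(s-1)\rfloor$, which under $n \ge s^2$ exceeds $(n-2)r$, the maximum possible difference between the minimum and maximum degree of an edge-weighted complete graph. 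This is precisely the ``sliding window'' alternative you mention and discard at the end of your proposal. Comparing the two: your block partition is self-contained and arguably cleaner, since the buckets are disjoint (so the pigeonhole count is immediate), it needs only the crude observation that all degrees lie in $[0,(n-1)r]$ rather than the sharper $(n-2)r$ bound on the max--min gap, and it even delivers the slightly stronger conclusion $|deg(x)-deg(y)| < sr$; the paper's window argument, on the other hand, adapts to the actual degree sequence rather than to a fixed grid of values, which is the style of argument that generalizes to statements like the Erd\H{o}s et al.\ result quoted after the lemma (spread bounded by $s-2$ for $r=1$), where a fixed value grid of width $sr$ would be too coarse. Both arguments invoke the hypothesis $n \ge s^2$ at the same point, namely the final count.
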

\begin{proof}
Clearly, the difference between the minimum degree of $G$ and the maximum degree of $G$ is at most $(n-2)r$.
We need to prove that for any positive integer $s$, if $n \ge s^2$, then there is a set $S$ of $s$ vertices with
the property that $|deg(x)-deg(y)| \le sr$ for any $x,y \in s$.
To see this, assume that the vertices are $\{v_1,\ldots,v_n\}$ where $deg(v_i) \le deg(v_{i+1})$.
Let $S_j = \{v_{(s-1)j+1},\ldots,v_{(s-1)(j+1)+1}\}$ for $j=0,\ldots,\lfloor (n-1)/(s-1) \rfloor -1$.
Observe that the last vertex of $S_j$ is also the first vertex of $S_{j+1}$.
Now, if some $S_j$ has the desired property of $S$, we are done.
Otherwise the difference between $deg(v_1)$ and the last vertex of $S_{\lfloor (n-1)/(s-1) \rfloor-1}$ is at least
$$
(sr+1) \cdot \lfloor \frac{n-1}{s-1} \rfloor \ge (sr+1)\frac{n-s+1}{s-1} > (n-2)r
$$
where the last inequality follows from the assumption $n \ge s^2$. As this contradicts the maximum gap between the minimum
and maximum degree, the lemma follows.
\end{proof}
We note that for the case $r=1$, a slightly stronger result follows from a result of Erd\H{o}s et al \cite{ECRS-1993}
using the Erd\H{o}s-Gallai criterion for degree sequences. It is proved that for any $n \ge s \ge 2$ there is a set $S$ of $s$ vertices with the property that
$|deg(x)-deg(y)| \le s-2$ for any $x,y \in S$.

Recall that $R_r(k)$ is the {\em multicolored Ramsey number}, which is the least integer $s$ such that in any coloring of the edges of the
complete graph with $s$ vertices using $r$ colors, there is a monochromatic $K_k$ (see \cite{radziszowski-2006}).

\begin{prevproof}{Theorem}{t:1}
Let $s=R_{r+1}(k)$ be the multicolored Ramsey number, and set
$$
C = C(k,r) =  \max\{s^2, k+(s+2)(2r(k-1)+1)^{k-1}\}\;.
$$
Consider a complete graph with $n$ vertices whose edges have weights in $\{0,\ldots,r\}$.
We may assume that $n \ge C$ as otherwise we can just delete, say, $n-2 \le C$ vertices and obtain a regular subgraph with two vertices.
Since $n \ge C \ge s^2$, we have by Lemma \ref{l:pigeon} that there is a set $S$ of $s$ vertices
with the property that $|deg(x)-deg(y)| \le sr$ for any $x,y \in S$. Thus, for some $p \in \{0,\ldots,r\}$ the induced complete subgraph $G[S]$ has
an induced complete subgraph on $k$ vertices, denoted by $K$, such that the edge $(x,y)$ for $x,y \in K$ has weight $p$.

Without loss of generality, assume that $V(G) \setminus K = \{v_1,\ldots,v_{n-k}\}$ and $K=\{v_{n-k+1},\ldots,v_n\}$.
Let $w(v_i,v_j)$ denote the weight of the edge connecting $v_i$ and $v_j$.
We construct a sequence of $n-k$ vectors $x_1,\ldots,x_{n-k}$ of $[-r,r]^{k-1}$ as follows.
Coordinate $j$ of $x_i$ is $w(v_{n-k+j},v_i)-w(v_n,v_i)$ for $i=1,\ldots,n-k$ and $j=1,\ldots,k-1$.
Observe that indeed $w(v_{n-k+j},v_i)-w(v_n,v_i) \in \{-r,\ldots,r\}$ as required.
What can be said about the sum of all the $j$'th coordinates?
\begin{eqnarray*}
\sum_{i=1}^{n-k} (w(v_{n-k+j},v_i)-w(v_n,v_i)) & = &\sum_{i=1}^{n-k}w(v_{n-k+j},v_i) - \sum_{i=1}^{n-k}w(v_n,v_i) \\
& = & (deg(v_{n-k+j})-p(k-1)) - (deg(v_n)-p(k-1)) \\
& = & deg(v_{n-k+j}) - deg(v_n) \le sr\;.
\end{eqnarray*}
Hence,
$$
z = \sum_{i=1}^{n-k} x_i \in [-sr,sr]^{k-1}\;.
$$
Since $n-k \ge C - k \ge (s+2)(2r(k-1)+1)^{k-1}$, we have by Corollary \ref{c:2} with $d=k-1$ and $q=sr$ that there is a subsequence of $X$ of size at most
$(s+2)(2r(k-1)+1)^{k-1}$ whose sum is $z$.
Deleting the vertices of $G$ corresponding to the elements of this subsequence results in a subgraph with at least $n-(s+2)(2r(k-1)+1)^{k-1} \ge n-C$ vertices
in which all the $k$ vertices of $K$ have the same degree.
\end{prevproof}

\noindent
The following corollary is immediate from the bound $C(k,1) \le \max\{s^2, k+(s+2)(2(k-1)+1)^{k-1}\}$ given in the proof of Theorem \ref{t:1}, together with the well known facts that $s=R_2(k) < 4^k$ and $R_2(3)=6$.
\begin{corollary}\label{c:explicit}
We have $C(k) \le (8k)^k$ and $C(3) \le 203$.
\end{corollary}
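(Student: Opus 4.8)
The plan is purely computational: substitute $r=1$ into the bound
$$C(k,1) \le \max\{s^2,\ k+(s+2)(2(k-1)+1)^{k-1}\}$$
extracted from the proof of Theorem \ref{t:1}, where now $s=R_{r+1}(k)=R_2(k)$, and then bound each of the two terms in the maximum separately using the standard estimate $R_2(k)<4^k$. Since $C(1)=C(2)=0$ we may assume $k\ge 2$ throughout, which is precisely the range in which the clean exponential estimates below hold.

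For the first term I would use $s^2<(4^k)^2=16^k$, and note that $16^k\le (8k)^k$ exactly when $8k\ge 16$, i.e. $k\ge 2$. For the second term I would first enlarge the factors slightly: replace $(2(k-1)+1)^{k-1}=(2k-1)^{k-1}$ by $(2k)^{k-1}=2^{k-1}k^{k-1}$, and replace $s+2$ by $4^k+2\le 2\cdot 4^k$ (valid since $4^k\ge 2$ for $k\ge 1$). This yields
$$(s+2)(2k-1)^{k-1} \le 2\cdot 4^k\cdot 2^{k-1}k^{k-1} = 2^{3k}k^{k-1} = 8^k k^{k-1}.$$
Writing $8^k k^{k-1}=(8k)^k/k$ shows that even after adding the leftover $+k$ we stay comfortably below $(8k)^k=8^k k^k$ (the needed inequality $k+(8k)^k/k\le (8k)^k$ reduces to $k\le (8k)^k(k-1)/k$, which is enormous slack for $k\ge 2$). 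Taking the maximum of the two bounds then gives $C(k)=C(k,1)\le (8k)^k$.

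For the explicit value $C(3)$ I would simply substitute the known Ramsey number $R_2(3)=6$, so $s=6$ and $k=3$. The first term is $s^2=36$, while the second is
$$k + (s+2)(2(k-1)+1)^{k-1} = 3 + 8\cdot 5^2 = 3 + 200 = 203.$$
Hence $C(3)\le \max\{36,203\}=203$.

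There is essentially no genuine obstacle here beyond bookkeeping. The only points requiring minor care are verifying that both exponential estimates are simultaneously dominated by $(8k)^k$ in the intended range (the first forces $k\ge 2$), and confirming that the remaining small cases $k\in\{1,2\}$ are handled trivially by $C(1)=C(2)=0$. The arithmetic for $C(3)$ is exact, so no slack is needed there.
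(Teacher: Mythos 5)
Your proposal is correct and follows exactly the paper's route: the paper obtains the corollary by substituting $r=1$, $s=R_2(k)<4^k$, and $R_2(3)=6$ into the bound $C(k,1)\le\max\{s^2,\,k+(s+2)(2(k-1)+1)^{k-1}\}$ from the proof of Theorem \ref{t:1}, declaring the rest ``immediate.'' You have merely written out the arithmetic (the $16^k$ versus $(8k)^k$ comparison and the $8^k k^{k-1}$ estimate) that the paper leaves implicit, and your computation $3+8\cdot 5^2=203$ is the paper's intended one.
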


\section{Equating three vertices}\label{s:c3}

In this section we prove that $3 \le C(3)  \le 6$.
\begin{prop}
For any graph $G$ with at least $5$ vertices, one can delete at most $6$ vertices such that the subgraph obtained has at least three vertices with the same degree.
Consequently, $C(3) \le 6$.
\end{prop}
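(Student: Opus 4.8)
The plan is to prove that any graph $G$ on $n \ge 5$ vertices can be reduced, by deleting at most $6$ vertices, to a subgraph containing three vertices of equal degree. I would treat the problem as a local balancing argument, exploiting the fact that in the case $r=1$ and $k=3$ we only need to equate \emph{three} degrees, which gives enough slack for a direct case analysis rather than the full machinery of Theorem~\ref{t:1}. The first step is to invoke the strengthened version of Lemma~\ref{l:pigeon} noted after its proof (from \cite{ECRS-1993}): for $r=1$ and any $n \ge s$, there is a set $S$ of $s$ vertices whose degrees differ pairwise by at most $s-2$. Taking $s=3$, this guarantees three vertices $u,v,w$ whose degrees lie in an interval of length at most $1$, so their degrees take at most two distinct values. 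I would then aim to delete a bounded number of vertices so as to collapse these two values into one.

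The key idea in the second step is to fix a distinguished vertex, say $w$, and control the degrees of $u,v,w$ by deleting vertices chosen according to their adjacencies to $\{u,v,w\}$. Each deleted vertex $x$ changes $\deg(u)$, $\deg(v)$, $\deg(w)$ by either $0$ or $-1$ depending on whether $x$ is adjacent to each of them. Since there are $2^3=8$ adjacency types of $x$ relative to $\{u,v,w\}$ (and $w$ itself may be adjacent to $u$, $v$, or both, contributing a fixed initial offset), I would argue that among the remaining $n-3 \ge 2$ vertices there are enough vertices of the right adjacency types to adjust the pairwise degree differences of $u,v,w$ down to zero. Concretely, the differences $\deg(u)-\deg(w)$ and $\deg(v)-\deg(w)$ each start in $\{-1,0,1\}$, and deleting a vertex adjacent to exactly one of two chosen targets shifts one difference by $1$ relative to the other; the claim is that at most $6$ such deletions always suffice to zero out both differences simultaneously.

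The main obstacle I anticipate is the \textbf{availability} of vertices with the needed adjacency patterns: the balancing argument only works if, whenever a difference must be corrected, there actually exists an undeleted vertex adjacent to the correct subset of $\{u,v,w\}$. This is where $n \ge 5$ and the budget of $6$ deletions must be spent carefully, and where a genuine case analysis on the adjacencies among $u,v,w$ and on which correction vectors are realizable becomes unavoidable. I would handle this by splitting into cases according to the initial values of the two differences (there are boundedly many, essentially $3 \times 3$ combinations up to symmetry) and verifying in each case that the required adjacency types are present or can be forced; degenerate configurations where a needed type is absent would be resolved by instead deleting one of $u,v,w$ and re-selecting from the remaining degree-clustered triple. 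The constant $6$ should fall out as the worst case over this analysis, and I would expect the lower bound $C(3) \ge 3$ (from the general $C(k) \ge k-2$ bound, sharpened here) to be established separately by exhibiting an explicit family of graphs.
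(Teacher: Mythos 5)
Your plan starts the same way the paper does (the degree-clustering result of \cite{ECRS-1993}), but the step you defer as ``a genuine case analysis'' is exactly where the real difficulty lies, and with your choice $s=3$ it cannot be completed. The problem is not just that a vertex of the needed adjacency type might be temporarily missing: for an \emph{arbitrary} triple $u,v,w$ with degrees differing by at most $1$, equalization by deletions can be provably impossible without touching the triple itself. Concretely, suppose $\deg(u)=\deg(v)=d$, $\deg(w)=d+1$, with $N(u)=N(v)=\{w\}\cup A$ and $N(w)=\{u,v\}\cup A$ for some set $A$ (such graphs exist for every $d$). Then for any deleted set $X$ disjoint from $\{u,v,w\}$ we have $X\cap N(w)\subseteq X\cap N(u)$, so $|X\cap N(w)|\le |X\cap N(u)|$; but equalizing the three degrees forces $|X\cap N(w)|=|X\cap N(u)|+1$, a contradiction. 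So your balancing scheme gets genuinely stuck on some triples, and your proposed fallback (delete one of $u,v,w$ and re-select a clustered triple in the smaller graph) has no termination or budget guarantee --- each re-selection can land you in the same trap, and the deletions accumulate.

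The paper's proof resolves precisely this obstacle by taking $s=5$ rather than $s=3$: among any $5$ vertices there is either a triangle, or an independent set of size $3$, or the five induce a $C_5$, and in each case one can extract a triple $x,y,z$ with $\deg(x)\le\deg(y)\le\deg(z)$ such that $(x,y)\in E$ if and only if $(x,z)\in E$, and $(y,z)\in E$ implies $\{x,y,z\}$ is a triangle. This Ramsey-type structure on the triple is what guarantees availability: it ensures the candidate deletion vertices (in $N(z)\setminus N(x)$, $N(y)\setminus N(x)$, $N(z)\setminus N(y)$) are never themselves members of the triple, and simple counting shows these sets are nonempty whenever a correction is needed. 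The constant $6$ then comes from explicit bookkeeping against the initial spread of at most $3$ (at most three rounds of at most two deletions in the case $\deg(x)<\deg(y)=\deg(z)$, and bounds of the form $p+2q\le 6$ or $p+r\le 2p\le 6$ in the case $\deg(x)\le\deg(y)<\deg(z)$). Without some structural guarantee of this kind on the chosen triple, the counting you hope ``falls out'' of the case analysis does not exist, so the proposal as written has a genuine gap.
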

\begin{proof}
By a result of \cite{ECRS-1993}, $G$ has a set $X$ of $5$ vertices such that $|deg(u)-deg(v)| \le 3$ for $u,v \in X$.
Now, either $X$ has a triangle, or else $X$ has an independent set of size $3$, or else $X$ induces a $C_5$.
In any case, this implies that $X$ has three vertices $x,y,z$ with $deg(x) \le deg(y) \le deg(z)$ such that the following holds:
$(x,y)$ is an edge if and only if $(x,z)$ is an edge. Furthermore, if $(y,z)$ is an edge, then $\{x,y,z\}$ induce a triangle.
We use this property implicitly throughout the remainder of the proof.
We may also assume $deg(z) > deg(x)$ otherwise $x,y,z$ already have the same degree.
Throughout the proof we denote by $N(.)$ the set of neighbors of a vertex in the current $G$ (that is, in the graph $G$ after some vertices have possibly
been deleted). Similarly, we denote by $deg(.)$ the degree of a vertex in the current $G$.

Consider first the case $deg(x) < deg(y) = deg(z)$ in the original $G$.
If $(N(z) \setminus N(x)) \cap (N(y) \setminus N(x)) \neq \emptyset$ we can delete a vertex of this intersection
and decrease the degrees of $y$ and $z$ by $1$ without affecting the degree of $x$.
Otherwise, if $(N(z) \setminus N(x)) \cap (N(y) \setminus N(x)) = \emptyset$ we can delete a vertex of
$N(z) \setminus N(x)$ and a vertex of $N(y) \setminus N(x)$ and decrease the degrees of $y$ and $z$ by $1$ without affecting the degree of $x$.
Observe that in any case we delete at most two vertices. Repeating this process at most three times we eventually obtain
$deg(x)=deg(y)=deg(z)$. Overall, we have deleted at most $6$ vertices.

Consider next the case $deg(x) \le deg(y) < deg(z)$ in the original $G$.
Let $p=deg(z)-deg(y)$ and let $q=deg(y)-deg(x)$, and observe that $p+q \le 3$.
Let us first equate $deg(z)$ and $deg(y)$ by deleting
some $u \in N(z) \setminus N(y)$. Observe that $u \neq x$. We always prefer to delete a vertex $u$ that is non-adjacent to $x$, as long as there is
such a vertex $u$. Overall, we have deleted $p$ vertices. The problem is that in the current graph we may have
that $deg(x)$ also decreased by some amount $r \le p$. Suppose first that $r=0$.
As in the previous case, we may need to delete $2q$ additional vertices to equate the degrees of $y$ and $z$ to that of $x$.
Overall, we have deleted at most $p+2q \le 6$ vertices.
If $r > 0$ then this means that at some point, when we deleted a vertex $u$, that vertex also
had to be adjacent to $x$. Hence, in the current graph, $(N(z) \setminus N(x)) \subset (N(y) \setminus N(x))$.
So, we may simply delete $r$ additional vertices (all of them from $N(z) \setminus N(x)$) to equate the degrees of $y$ and $z$ to that of $x$.
Overall, we deleted $p+r \le 2p \le 6$ vertices.
\end{proof}

\begin{figure}[ht]
\begin{center}
\includegraphics[width = 3.8in]{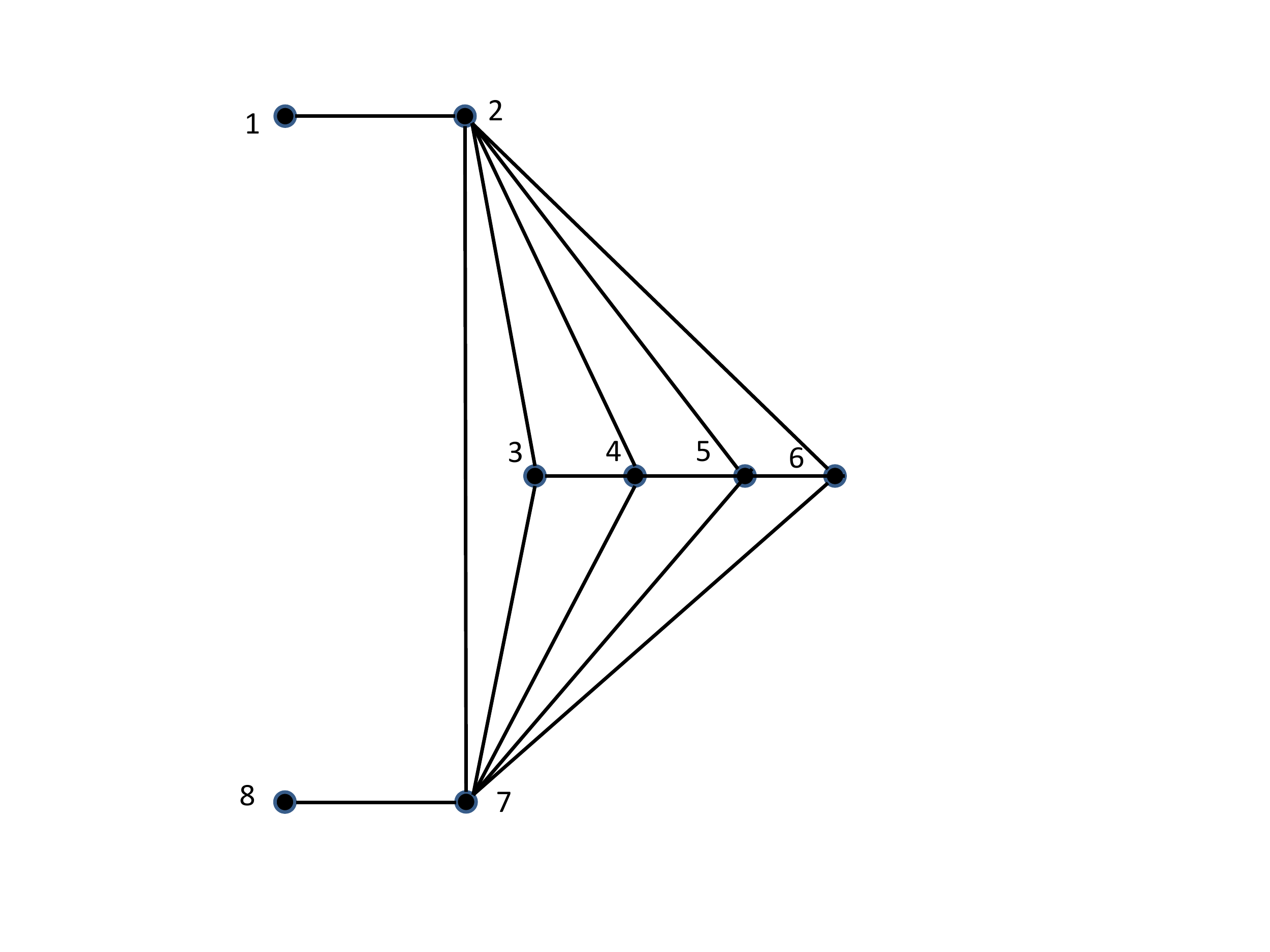}
\caption{\small A graph showing that $C(3) \ge 3.$
\label{f:graph}}
\end{center}
\end{figure}

The graph in Figure \ref{f:graph} proves that $C(3) \ge 3$. If we delete vertices $1,2,8$, then in the resulting graph,
vertices $4,5,7$ have the same degree. It is easy to check that deleting any two vertices does not yield a graph
with repetition number $3$. In fact, a computer verification asserts that this graph is the smallest graph (in terms of the number
of vertices) for which we need to delete more than two vertices to obtain repetition number $3$.

\section{A lower bound for $C(k)$}\label{s:lower}

In this section we prove that $C(k) = \Omega(k \log k)$.
Our construction is based upon an additional building block: graphs that have the property that
all of their induced subgraphs have repetition number which is not too large.
\begin{lemma}
\label{l:goodgraph}
For any integer $n$, there exists a graph $D_n$ with $n$ vertices such that no induced subgraph of $D_n$ has more than
$3n/\ln n$ vertices with the same degree.
\end{lemma}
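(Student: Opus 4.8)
The plan is to take $D_n$ to be a random graph drawn from $G(n,1/2)$ and to show that with positive (indeed, overwhelming) probability every induced subgraph has repetition number at most $3n/\ln n$. When $n \le e^3$ the bound $3n/\ln n \ge n$ is vacuous, so any graph on $n$ vertices works and we may assume $n$ is large. Put $t = \lfloor 3n/\ln n \rfloor + 1$, and let $\mathcal{E}$ be the event that there exist a vertex set $S$ and an integer $d$ such that at least $t$ vertices of $S$ have degree exactly $d$ in the induced subgraph $G[S]$. It then suffices to prove $\Pr[\mathcal{E}] < 1$.

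The heart of the argument is an anti-concentration estimate that manufactures genuine independence. Fix a candidate set $T$ of $t$ vertices meant to share a common degree $d$ inside some $G[S]$ with $T \subseteq S$, and split $T$ arbitrarily into two parts $A \sqcup B$ with $|A| = \lceil t/2 \rceil$ and $|B| = \lfloor t/2 \rfloor$. I would reveal all edges of $G$ \emph{except} those running between $A$ and $B$. This fixes, for each $a \in A$, the number $c_a$ of neighbours of $a$ inside $S \setminus B$, so that the event $\deg_{G[S]}(a) = d$ becomes $|N(a) \cap B| = d - c_a$. The variables $|N(a) \cap B|$ for $a \in A$ depend on disjoint sets of edges, hence are mutually independent binomials with parameters $(|B|, 1/2)$, and each prescribed value is attained with probability at most $|B|^{-1/2}$ (the maximal point mass of a binomial). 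Consequently, the probability that every vertex of $A$ has degree $d$ in $G[S]$ is at most $|B|^{-|A|/2}$, and crucially this holds for \emph{every} fixed choice of $S$ and $d$.

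It then remains to take a union bound over the at most ${n \choose t}\, 2^{\,n-t}\,(n+1)$ triples $(T,S,d)$, where we choose the $t$-set $T$, the remaining vertices $S \setminus T$ of the induced subgraph, and the common degree $d$; any induced subgraph violating the claimed bound yields such a triple, since we may take $A$ to lie among its $\ge t$ equal-degree vertices. This gives
$$
\Pr[\mathcal{E}] \;\le\; {n \choose t}\, 2^{\,n-t}\,(n+1)\,|B|^{-|A|/2}.
$$
Substituting $t = \lfloor 3n/\ln n \rfloor + 1$, using ${n \choose t} \le (en/t)^t$ and $\ln t = (1+o(1))\ln n$, the logarithm of the right-hand side works out to $\bigl(-\tfrac14 t \ln t + t\ln n + n\ln 2 + o(n)\bigr) = \bigl(-\tfrac14 + o(1)\bigr)\cdot 3n + n\ln 2 + o(n) < 0$ for large $n$. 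Hence $\Pr[\mathcal{E}] \to 0$, a suitable $D_n$ exists, and the lemma follows.

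The step I expect to be the main obstacle is precisely the anti-concentration estimate: one must arrange the conditioning so that the $|A|$ degree constraints become genuinely independent events, which is exactly what the $A/B$ split together with the ``reveal every edge except those between $A$ and $B$'' device accomplishes; everything after that is routine bookkeeping. It is worth noting that the constant $3$ is essentially forced by this calculation, as the decisive balance is between the $\tfrac14 t\ln n$ gain from anti-concentration and the $n\ln 2$ loss from the number of choices of the induced subgraph $S$, which requires the constant $c$ in $c\,n/\ln n$ to exceed $4\ln 2 \approx 2.77$.
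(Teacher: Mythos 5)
Your route is genuinely different from the paper's. The paper builds $D_n$ explicitly: vertex-disjoint cliques $B_1,\ldots,B_s$ of sizes $a_i=\lceil 2n/(i\ln n)\rceil$ together with isolated vertices; inside an induced subgraph a vertex's degree is one less than the surviving size of its clique, so a set of equal-degree vertices meets each relevant clique in at most $a_j$ vertices (where $j$ is the largest index of a clique it touches), and a two-line count gives the $3n/\ln n$ bound for every $n$. You instead take $G(n,1/2)$ and prove the property holds with high probability, via the $A$/$B$ edge-exposure trick, the bound $|B|^{-1/2}$ on the maximal point mass of a binomial, and a union bound over triples $(T,S,d)$. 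That core idea is sound: the sets of unexposed edges from distinct $a\in A$ to $B$ are disjoint, so the conditional independence is real, and your closing observation that everything hinges on $c/4>\ln 2$ is exactly the right accounting. What the paper's construction buys is an elementary, explicit graph valid for all $n$; what yours buys is the stronger (and independently interesting) statement that almost every graph has the property.

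Two concrete repairs are needed. First, your displayed computation is wrong as written: the term $t\ln n$ equals $3n+O(\ln n)$, not $o(n)$, and it silently vanishes in your next equality; taken literally the exponent is $(3-3/4+\ln 2)\,n+o(n)>0$ and the proof fails. The term consistent with the bound $\binom{n}{t}\le(en/t)^t$ that you quote is $t\ln(en/t)=O(n\ln\ln n/\ln n)=o(n)$, and with that correction the exponent is $(\ln 2-3/4)\,n+o(n)<0$, as required. Second, ``we may assume $n$ is large'' is doing more work than you admit. The margin $3/4-\ln 2\approx 0.057$ is tiny and your error terms decay only like $n\ln\ln n/\ln n$, so the union bound becomes negative only for enormously large $n$, while the vacuous range covers only $n\le e^3\approx 20$; every intermediate $n$ is left unproved, even though the lemma asserts the statement for any integer $n$ (and the paper's construction delivers it for any $n$). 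Since the paper only invokes the lemma with $n=5k$ and $k$ large, this is a scope defect rather than a fatal flaw, but you should either state the lemma for $n\ge n_0$ or inflate the constant $3$ so that the union bound closes sooner.
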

\begin{proof}
Consider the non-increasing sequence of integers $a_i=\lceil 2n/(i\ln n) \rceil$.
Let $S_i= \sum_{j=1}^i a_j$ and let $s$ be the largest integer such that $S_s \le n$.
Observe that $s \le \sqrt{n}$ since
$$
S_{\lceil \sqrt{n} \rceil} = \sum_{i=1}^{\lceil \sqrt{n} \rceil} a_i \ge \frac{2n}{\ln n} \left(\sum_{i=1}^{\lceil \sqrt{n} \rceil}\frac{1}{i}\right) > n\;.
$$
Note that we also have $n-S_s \le a_1 = \lceil 2n/(\ln n) \rceil$.

Construct a graph $D_n$ with $n$ vertices as follows. Take $s$ vertex-disjoint cliques $B_1,\ldots,B_s$ where $B_i=K_{a_i}$ is a clique on
$a_i$ vertices. The remaining set of $n-S_s$ vertices of $D_n$ is a set denoted by $B_{s+1}$ consisting of $n-S_s$ isolated vertices.

Let $H$ be any induced subgraph of $D_n$, and suppose that $K$ is a set of $k$ vertices
of $H$ with the same degree in $H$. Let $j$ be the largest index such that $K$ contains a vertex from $B_j$. Observe that if $j=s+1$, then
all vertices of $K$ have degree $0$ in $H$ and thus contain at most one vertex from each $B_i$ for $i \ge 1$.
In this case we have
$$
k \le |B_0| + s = n-S_s+s \le \lceil 2n/(\ln n) \rceil + \sqrt{n} \le \frac{3n}{\ln n}\;.
$$
If $j \le s$, then all vertices of $K$ have the same degree in $H$ which is at most $a_j-1$ and thus $K$ contains at most $a_j$ vertices from each $B_i$
for $i=1,\ldots,j$. Hence
$$
k \le j a_j = j\lceil 2n/(j\ln n) \rceil \le \frac{2n}{\ln n}+\sqrt{n} \le \frac{3n}{\ln n}\;.
$$
\end{proof}

\begin{theorem}
\label{t:3}
$C(k) = \Omega(k \log k)$.
\end{theorem}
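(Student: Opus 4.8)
The plan is to use the graph $D_n$ of Lemma~\ref{l:goodgraph} itself as the extremal example, choosing its order $n$ so that the universal cap of $3n/\ln n$ on the repetition number of every induced subgraph of $D_n$ falls just below $k$. The point that makes this effective is that a regular graph has repetition number equal to its order, so Lemma~\ref{l:goodgraph} says not only that $D_n$ is far from having $k$ equal degrees, but that even every regular induced subgraph of $D_n$ has at most $3n/\ln n$ vertices. Hence the only way to force $k$ repetitions in $D_n$ by vertex deletions is to cut all the way down to a small, nearly-regular remainder, and this is what will force many deletions.

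Concretely, I would first let $n$ be the largest integer satisfying $3n/\ln n < k$ and take $G=D_n$. Solving $3n/\ln n = k$ gives $n = (1+o(1)) \frac{k\ln k}{3}$ (using $\ln n \sim \ln k$), so $n = \Theta(k\log k)$; this elementary estimate is the only real computation in the proof. Now I unwind the definition of $C(k)$ for this particular $G$: there must exist an induced subgraph $H$ of $D_n$ with $|H|\ge n-C(k)$ and $rep(H)\ge \min\{k,\,n-C(k)\}$. By Lemma~\ref{l:goodgraph}, $rep(H)\le 3n/\ln n<k$. Comparing the two inequalities, $\min\{k,\,n-C(k)\}\le rep(H)<k$ forces the minimum to be realized by its second branch (otherwise it would equal $k$, a contradiction), whence $n-C(k)\le rep(H)\le 3n/\ln n$, i.e. $C(k)\ge n\bigl(1-\frac{3}{\ln n}\bigr)$.

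Combining the two estimates finishes the proof: since $n=(1+o(1))\frac{k\ln k}{3}$ the factor $1-\frac{3}{\ln n}$ tends to $1$, so $C(k)\ge n(1-o(1))=\Omega(k\log k)$. I do not anticipate a serious obstacle, and the only step deserving care is the logical one just used — that the cap $rep(H)<k$ forces the $\min$ in the definition to be met through the $n-C(k)$ branch, i.e. that $D_n$ can satisfy the requirement only by being reduced to a remainder of order at most $3n/\ln n$. Everything else is routine bookkeeping.
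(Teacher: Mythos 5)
Your proof is correct, but it takes a genuinely different---and considerably shorter---route than the paper. You take the extremal example to be $D_n$ itself, with $n$ tuned to be the largest integer satisfying $3n/\ln n < k$ (so $n=(1+o(1))k\ln k/3$), and you observe that since \emph{no} induced subgraph of $D_n$ can ever have $k$ vertices of equal degree, the definition of $C(k)$ can only be met through its fallback branch: the necessary condition $\min\{k,\,n-C(k)\}\le rep(H)\le 3n/\ln n<k$ forces $n-C(k)\le 3n/\ln n$, hence $C(k)\ge n-3n/\ln n>n-k=\Omega(k\log k)$. This logic is sound under the paper's definition (and under its informal restatement with the regular-subgraph clause). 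The paper instead proves a stronger assertion: for each large $k$ there are infinitely many, arbitrarily large graphs---the blowup graphs $H_n(q)$ with $q=5k$, obtained by blowing up a repetition-$2$ graph and planting a copy of $D_q$ inside each blowup class---in which one cannot create $k$ vertices of equal degree by deleting fewer than $(k\ln k)/10$ vertices. For those graphs the number of surviving vertices far exceeds $k$, so the $\min$ in the definition equals $k$ and the difficulty is genuinely that of forcing $k$ repetitions; by contrast, your single extremal graph has only $\Theta(k\log k)$ vertices and is hard only because $k$ repetitions are outright impossible in it, so the cost comes entirely from the escape clause. In short, your argument buys brevity and even a slightly better constant ($1/3$ versus $1/10$), while the paper's construction buys the more meaningful conclusion that even very large graphs require $\Omega(k\log k)$ deletions, which is the regime addressed by the upper bound in Theorem \ref{t:1}.
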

\begin{proof}
We will prove the slightly stronger assertion that for $k$ sufficiently large, there are infinitely many graphs such that one cannot equate
the degrees of $k$ vertices by removing fewer than $(k \ln k)/10$ vertices.

Let $H_n$ be any graph with $n$ vertices and with $rep(H_n)=2$. Denote is vertices by $\{h_1,\ldots,h_n\}$ where $deg(h_i) \le deg(h_{i+1})$.
Let $H_n(q)$ be the graph obtained from $H_n$ by the following procedure.
First, take a $q$-blowup of $H_n$, namely, each $h_i$ is replaced by an independent set of $q$ vertices, denoted by $B_i=\{h_{i,1},\ldots,h_{i,q}\}$.
We connect each vertex of $B_i$ to each vertex of $B_j$ if and only if $(h_i,h_j) \in E(H_n)$. Otherwise, $B_i \cup B_j$ is an independent set.
Next, for each $i=1,\ldots,n$ we place the graph $D_q$ of Lemma \ref{l:goodgraph} inside $B_i$.
Observe that $H_n(q)$ has $nq$ vertices, and, furthermore, since $rep(H_n) = 2$, we have that for $i > j$, $deg(h_i)-deg(h_j) \ge \lfloor (i-j)/2 \rfloor$.
Since $deg(h_i) q \le deg(h_{i,x}) < (deg(h_i)+1)q$ for all $x=1,\ldots,q$, we have that for $i > j$,
\begin{equation}
\label{e:q}
deg(h_{i,x})-deg(h_{j,y}) \ge \frac{q(i-j)}{2}-\frac{3q}{2}\;.
\end{equation}

We claim that for, say, $q=5k$, any induced graph obtained from $H_n(q)$ by deleting fewer than $(k\ln k)/10$ vertices, does not have $k$ vertices with the same degree.
Indeed, assume otherwise and suppose we can delete a set of vertices $X$ of size at most $(k \ln k)/10$ from $H_n(q)$ such that the resulting subgraph $H'$ has a set $K$ of $k$ vertices with the same degree.
Let $B'_i=B_i \setminus X$ for $i=1,\ldots,n$ and notice that if two vertices of $K$ belong to $B_i$, then they have the same degree in the subgraph induced by $B'_i$.
Notice also that in $H_n(q)$, the degrees of any two vertices of $K$ differ by at most $(k \ln k)/10$. Hence, if $v \in B_i$ and $u \in B_j$ are two vertices of $K$,
then $|i-j| \le (\ln k)/20-1$, as otherwise, by (\ref{e:q}), their degrees in $H_n(q)$ would differ by at least
$$
\frac{q(\ln k/20-1)}{2}-\frac{3q}{2} > \frac{k \ln k}{10}
$$
where in the last inequality we used $q=5k$ and that $k$ is sufficiently large.
It follows that some $B'_i$ contains at least $20k/\ln k$ vertices of $K$. However, as the subgraph induced by $B'_i$ was obtained from $D_q$ by deleting vertices,
we have by Lemma \ref{l:goodgraph} that it cannot have more than $3q/ \ln q$ vertices of the same degree.
But we now arrive at a contradiction since
$3q/ \ln q < 20k/\ln k $.
\end{proof}

\bibliographystyle{plain}

\bibliography{samedeg}

\end{document}